\newcommand{\diag}{\operatorname{diag}}
\newcommand{\per}{\operatorname{per}}
\newcommand{\coeff}{\operatorname{coeff}}
\newcommand{\sym}{\operatorname{sym}}
\newcommand{\fix}{\operatorname{fix}}
\newcommand{\sgn}{\operatorname{sgn}}
\newcommand{\Sn}{{\text{S}_n}}
\theoremstyle{theorem}
\newtheorem{thm}{Theorem}
\newtheorem*{thm*}{Theorem}
\newtheorem{lem}[thm]{Lemma}
\theoremstyle{definition}
\newtheorem{defn}[thm]{Definition}
\newtheorem{ex}[thm]{Example}
\begin{document}

\title{Two Proofs of the Hamiltonian Cycle Identity}
\markright{Hamiltonian Cycle Identity}
\author{Hamilton Sawczuk and Edinah Gnang\thanks{Department of Applied Mathematics and Statistics, Johns Hopkins University, Baltimore, MD 21218}}  

\maketitle

\begin{abstract}
The Hamiltonian cycle polynomial can be evaluated to count the number of Hamiltonian cycles in a graph. It can also be viewed as a list of all spanning cycles of length $n$. We adopt the latter perspective and present a pair of original proofs for the Hamiltonian cycle identity which relates the Hamiltonian cycle polynomial to the important determinant and permanent polynomials. The first proof is a more accessible combinatorial argument. The second proof relies on viewing polynomials as both linear algebraic and combinatorial objects whose monomials form lists of graphs. Finally, a similar identity is derived for the Hamiltonian path polynomial.
\end{abstract}

We refer to spanning cycles and the graphs that contain them as Hamiltonian. Hamiltonicity has long fascinated graph theorists and complexity theorists. The exploration of conditions for Hamiltonicity has generated hundred of results, many of which have the following flavor: if a graph $G$ is ``sufficiently connected," for example has a large number of edges, then it is Hamiltonian \cite{gou91,gou03}. At the same time, the computational problem of testing Hamiltonicity plays a central role in complexity theory as one of Karp's 21 original NP-complete problems and a special case of the Traveling Salesman Problem \cite{kar72}. The study of Hamiltonian cycles led to the following definition.

\begin{defn}
    Let $[n]:=\{1,2,\hdots,n\}$. Given an $n\times n$ matrix $A$, the \textbf{Hamiltonian cycle polynomial} of $A$, sometimes denoted $\operatorname{ham}(A)$ or $HC_n(A)$, is
    \[
        P_{HC_n}(A):=\sum_{\sigma\in HC_n}\prod_{i\in[n]}a_{i,\sigma(i)},
    \]
    where $HC_n$ denotes the set of permutations on $[n]$ containing exactly one cycle.
\end{defn}

\begin{ex}
    Observe that
    \[
        P_{H_3}(A)=a_{12}a_{23}a_{31}+a_{13}a_{21}a_{32},
    \]
    and each monomial of $P_{HC_3}(A)$ corresponds to a Hamiltonian cycle below.

    \[
        \begin{array}{ccc}
             \begin{tikzpicture}
                \node (1) at (0,0) {};
                \node (2) at (2,0) {};
                \node (3) at (4,0) {};
                \draw[fill=black] (0,0) circle (3pt);
                \draw[fill=black] (2,0) circle (3pt);
                \draw[fill=black] (4,0) circle (3pt);
                \node at (0,-0.5) {$1$};
                \node at (2,-0.5) {$2$};
                \node at (4,-0.5) {$3$};
                \draw (1) edge[color=black,very thick,->,out=45,in=135,looseness=1] (2) node [above=23pt,right=17pt,fill=white] {$a_{12}$};
                
                \draw (2) edge[color=black,very thick,->,out=45,in=135,looseness=1] (3) node [above=23pt,right=17pt,fill=white] {$a_{23}$};
                
                \draw (3) edge[color=black, very thick, ->, out=225,in=315,looseness=1] (1) node [below=34pt,left=45pt,fill=white] {$a_{31}$};
            \end{tikzpicture} & \hspace{25pt} &
            \begin{tikzpicture}
                \node (1) at (0,0) {};
                \node (2) at (2,0) {};
                \node (3) at (4,0) {};
                \draw[fill=black] (0,0) circle (3pt);
                \draw[fill=black] (2,0) circle (3pt);
                \draw[fill=black] (4,0) circle (3pt);
                \node at (0,-0.5) {$1$};
                \node at (2,-0.5) {$2$};
                \node at (4,-0.5) {$3$};
                \draw (1) edge[color=black,very thick,<-,out=45,in=135,looseness=1] (2) node [above=23pt,right=17pt,fill=white] {$a_{21}$};
                
                \draw (2) edge[color=black,very thick,<-,out=45,in=135,looseness=1] (3) node [above=23pt,right=17pt,fill=white] {$a_{32}$};
                
                \draw (3) edge[color=black, very thick, <-, out=225,in=315,looseness=1] (1) node [below=34pt,left=45pt,fill=white] {$a_{13}$};
            \end{tikzpicture}
        \end{array}
    \]
    Thus we can think of $P_{HC_n}(A)$ as a list of Hamiltonian cycles on $[n]$. Alternatively, evaluating $P_{HC_n}(A)$ at the adjacency matrix of a graph $G$ returns the number of Hamiltonian cycles in $G$. Thus formulas expressing $P_{HC_n}$ are algorithms or arithmetic circuits for the counting variant of the Hamiltonian cycle problem. One such formula is the the Hamiltonian cycle identity, which we introduce after the following definition.
\end{ex}

\begin{defn}
    Given an $n\times n$ matrix $A$, the \textbf{determinant} of $A$ is
    \[  
        \det(A)=\sum_{\sigma\in\Sn}\sgn(\sigma)\prod_{i\in[n]}a_{i,\sigma(i)},
    \]
    and the \textbf{permanent} of $A$ is
    \[
        \per(A)=\sum_{\sigma\in\Sn}\prod_{i\in[n]}a_{i,\sigma(i)},
    \]
    where $\Sn$ is the group of permutations on $[n]$ and $\sgn$ is the sign function.
\end{defn}

\begin{thm*}
    The Hamiltonian cycle polynomial satisfies the identity
    \[
        P_{HC_n}(A)=\sum_{S\subseteq [n-1]}\det(-A_S)\per(A_{[n]\setminus S}),
    \]
    where $A_S$ denotes the principle submatrix of $A$ indexed by $S$, and $\det(A_{\varnothing}):=1$.
\end{thm*}

This identity is known, appearing for example in \cite{kne20} and closely related to the identities of \cite{koh77, kar82, bax93}. The contribution of this work is a pair of original proofs. Both proofs regard various polynomials as listing families of graphs. The first is a more accessible combinatorial argument. The second derives the Hamiltonian cycle identity from Tutte's directed matrix tree theorem using the determinant sum lemma and partial derivatives to distinguish graphs based on their degree sequences. We conclude with a similar identity for the Hamiltonian path polynomial which we believe to be new.

Although the Hamiltonian cycle identity does not represent a ``small" arithmetic circuit, the authors are struck by its beauty and use of the determinant and permanent polynomials, both of which are of highest mathematical significance \cite{val79a,val79b}. We now illustrate the identity with an example.

\begin{ex}
    When $n=3$,
    \[
        \begin{matrix}
            P_{H_3}(A) & = & \det(-A_\varnothing)\per(A) \\
            & & +\det(-A_{\{1\}})\per(A_{\{2,3\}})+\det(-A_{\{2\}})\per(A_{\{1,3\}}) \\
            & & +\det(-A_{\{1,2\}})\per(A_{\{3\}}) \\ \\
            & = & \textcolor{magenta}{a_{11}a_{22}a_{33}}+\textcolor{violet}{a_{11}a_{23}a_{32}}+\textcolor{blue}{a_{12}a_{21}a_{33}} \\
            & & +a_{12}a_{23}a_{31}+a_{13}a_{21}a_{32}+\textcolor{teal}{a_{13}a_{22}a_{31}}\\
            & & +(-a_{11})(\textcolor{magenta}{a_{22}a_{33}}+\textcolor{violet}{a_{23}a_{32}})+(-a_{22})(\textcolor{magenta}{a_{11}a_{33}}+\textcolor{teal}{a_{13}a_{31}}) \\
            & & +(\textcolor{magenta}{a_{11}a_{22}}-\textcolor{blue}{a_{12}a_{21}})a_{33} \\ \\
            & = & a_{12}a_{23}a_{31}+a_{13}a_{21}a_{32}.
        \end{matrix}
    \]
    Intuitively, $\per(A)$ lists all permutations on $[n]$, and all permutations with more than one cycle appear in $P_{HC_n}(A)$ the same number of times with positive and negative sign, canceling. We now introduce some vocabulary that will be helpful for both proofs of the theorem.
\end{ex}

\begin{defn}
    Given a function $f:[n]\rightarrow[n]$, the graph of $f$, denoted $G_f$, is a graph on $[n]$ with edge set $\{(i,f(i)):i\in[n]\}$. We say a graph $G$ is \textbf{functional} if it is the graph of some function $f$. Note that $G$ is \textbf{functional} if and only if $G$ has out-degree sequence $(1,1,\hdots,1)$.
\end{defn}

\begin{defn}
    Given a function $f:[n]\rightarrow[n]$, the \textbf{edge monomial} of the graph $G_f$ is
    \[
        M_{G_f}=\prod_{i\in[n]}a_{i,f(i)}.
    \]
    Observe that the monoids of functions on $[n]$, functional graphs on $[n]$, and edge monomials of degree $n$ are all isomorphic; we refer to objects from these sets interchangeably.
\end{defn}

\begin{defn}
    Given a family $\mathcal{F}$ of functions $f:[n]\rightarrow[n]$, the \textbf{symbolic listing} of $\mathcal{F}$ is
    \[
        P_{\mathcal{F}}=\sum_{f\in\mathcal{F}}M_{G_f}=\sum_{f\in\mathcal{F}}\prod_{i\in[n]}a_{i,f(i)}.
    \]
    According to this definition, $P_{HC_n}$ is the symbolic listing of Hamiltonian cycles.
\end{defn}

\begin{ex}
    Notice that according to the definitions we just introduced,
    \[
        \per(A)=\sum_{\sigma\in\Sn}\prod_{i\in[n]}a_{i,\sigma(i)}
    \]
    is the symbolic listing of permutations on $[n]$, also referred to as cycle covers. In the case $n=2$ we have
    \[
        \per(A)=a_{11}a_{22}+a_{12}a_{21},
    \]
    and each monomial of $\per(A)$ is the edge monomial of a graph below.
    \[
    \begin{array}{ccc}
         \begin{tikzpicture}
            \node (0) at (0,0) {};
            \node (1) at (2,0) {};
            \draw[fill=black] (0,0) circle (3pt);
            \draw[fill=black] (2,0) circle (3pt);
            \node at (0,-0.5) {$1$};
            \node at (2,-0.5) {$2$};
            \draw (0) edge[very thick,->,out=135,in=45,looseness=10] (0) node [above=21pt,fill=white] {{$a_{11}$}};
            \draw (1) edge[very thick,->,out=135,in=45,looseness=10] (1) node [above=21pt,fill=white] {{$a_{22}$}};
            \draw (0) edge[color=white,out=315,in=225,looseness=1] (1);
        \end{tikzpicture} & \hspace{50pt} &
        \begin{tikzpicture}
            \node (0) at (0,0) {};
            \node (1) at (2,0) {};
            \draw[fill=black] (0,0) circle (3pt);
            \draw[fill=black] (2,0) circle (3pt);
            \node at (0,-0.5) {$1$};
            \node at (2,-0.5) {$2$};
            \draw (0) edge[very thick,->,out=45,in=135,looseness=1] (1) node [above=22pt,right=16pt,fill=white] {{$a_{12}$}};
            \draw (1) edge[very thick,->,out=225,in=315,looseness=1] (0) node [below=22pt,left=16pt,fill=white] {{$a_{21}$}};
        \end{tikzpicture}
    \end{array}
    \]
    Recall also that a function $f:[n]\rightarrow[n]$ is a permutation if and only if it is a bijection. Further, $f$ is bijective if and only if every vertex of its graph $G_f$ has in-degree one, i.e. $G_f$ is the spanning union of directed cycles, i.e. $G_f$ has in-degree sequence $(1,1,\hdots,1)$.
\end{ex}

\section{Combinatorial Proof}\label{s:cproof}

\begin{thm*}
    The Hamiltonian cycle polynomial satisfies the identity
    \[
        P_{HC_n}(A)=\sum_{S\subseteq [n-1]}\det(-A_S)\per(A_{[n]\setminus S}),
    \]
    where $A_S$ denotes the principle submatrix of $A$ indexed by $S$, and $\det(A_{\varnothing}):=1$.
\end{thm*}

\begin{proof}
    Observe that only edge monomials of permutations appear in the expanded form of $P_{HC_n}$. To see this, fix $S\subseteq[n-1]$ and consider the term
    \[
    P_S:=\det(-A_S)\per(A_{[n]\setminus S}).
    \]
    Recall that $\det(-A_S)$ lists permutations on $S$ and $\per(A_{[n]\setminus S})$ lists permutations on $[n]\setminus S$. Thus each term in their product $P_S$ is the edge monomial of the disjoint union of a permutation on $S$ and a permutation on $[n]\setminus S$, i.e. a permutation.

    So we fix a permutation $\sigma\in\Sn$ and express $\sigma$ as the disjoint union of cycles $C_1,C_2,\hdots,C_k$. Without loss of generality suppose vertex $n$ is in $C_k$. Denote the coefficients of the edge monomial $M_\sigma$ in the expanded forms of $P_{HC_n}$ and $P_S$ by $\coeff(M_\sigma)$ and $\coeff_S(M_\sigma)$ respectively, so
    \[
        \coeff(M_\sigma)=\sum_{S\subseteq[n-1]}\coeff_S(M_\sigma).
    \]

    Now observe that the edge monomial $M_\sigma$ appears in the expanded form of $P_S$ if and only if $\{C_1,C_2,\hdots,C_k\}$ is a refinement of the partition $\{S,[n]\setminus S\}$, i.e. all cycles of $\sigma$ lie entirely in $S$ or in $[n]\setminus S$. Further, when $S$ is fixed this happens in one way, so $M_\sigma$ appears exactly once, but its sign could be positive or negative, i.e. $\coeff_S(M_\sigma)=\pm1$.
    
    Again without loss of generality suppose
    \[
        V(C_{r_1})\cup\hdots\cup V(C_{r_t})=S;
    \]
    \[
        V(C_{r_{t+1}})\cup\hdots\cup V(C_{r_{k-1}})\cup V(C_{k})=[n]\setminus S,
    \]
    where $0\leq t<k$ since by assumption vertex $n$ is in $C_k$ and $n\not\in S$. Then $\coeff_S(M_\sigma)$ is equal to the sign of $M_\sigma$ in
    \[
        \det(-A_S)\per(A_{[n]\setminus S})=(-1)^{|S|}\det(A_S)\per(A_{[n]\setminus S}).
    \]
    Since all terms of $\per(A_{[n]\setminus S})$ are positive, this is the same as the sign of $M_\sigma$ in $(-1)^{|S|}\det(A_S)$.
    Therefore
    \[
        \coeff_S(M_\sigma)=(-1)^{|S|}\sgn(\sigma|_S)=(-1)^{|S|}(-1)^{|S|-t}=(-1)^t,
    \]
    where $\sigma|_{S}$ denotes the restriction of $\sigma$ to $S$, and the second equality comes from the fact that $\sgn(\sigma)=(-1)^{n-k}$ when $k$ is the number of cycles in $\sigma$. Finally we have
    \[
        \coeff(M_\sigma)=\sum_{S\subseteq[n-1]}\coeff_S(M_\sigma)=\sum_{t=0}^{k-1}\sum_{
        \substack{
            S\subseteq[n-1] \\
            S=V(C_{r_1})\cup\hdots\cup V(c_{r_t})
        }
        }
        (-1)^t
    \]
    \[
        =\sum_{t=0}^{k-1}{k-1\choose t}(-1)^t=
        \begin{cases}
            1 & k=1 \\
            (1-1)^{k-1} & k\geq 2
        \end{cases},
    \]
    i.e. $\coeff(M_\sigma)=1$ when $\sigma$ is a Hamiltonian cycle and $\coeff(M_\sigma)=0$ otherwise.
    
\end{proof}

\section{Symbolic Proof Background}\label{s:background}

\begin{thm}
    Recall \textbf{Tutte's directed matrix tree theorem} (TDMTT), which provides the following symbolic listing of rooted trees.
    \[
        P_{T_n}(A) = \sum_{i=1}^na_{i,i}\det\left(\diag(A\boldsymbol{1})-A\right)_{[n]\setminus\{i\}}
    \]
\end{thm}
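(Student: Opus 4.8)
The plan is to prove TDMTT as an identity of symbolic listings, in keeping with the philosophy of the paper. Unwinding the notation, $P_{T_n}(A)$ lists the functions $f\colon[n]\to[n]$ whose graph $G_f$ consists of a loop at some vertex $r$ (the \emph{root}, forced to be the unique fixed point of $f$) together with edges that direct every other vertex toward $r$; its edge monomial is $a_{r,r}\prod_{j\ne r}a_{j,f(j)}$. Write $L:=\diag(A\mathbf{1})-A$, so that $L_{jj}=\sum_{\ell\ne j}a_{j\ell}$ and $L_{jk}=-a_{jk}$ for $j\ne k$. The heart of the argument is the claim that, for each fixed $i\in[n]$, $\det\!\left(L_{[n]\setminus\{i\}}\right)$ is the symbolic listing of spanning in-arborescences rooted at $i$, that is, of functions $h\colon[n]\setminus\{i\}\to[n]$ with $h(j)\ne j$ for all $j$ and with the property that iterating $h$ from any vertex eventually reaches $i$. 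Granting this, multiplying by $a_{i,i}$ adjoins a loop at $i$ and produces the symbolic listing of rooted trees with root $i$; since every rooted tree has a unique root, summing over $i\in[n]$ lists each rooted tree exactly once, which is $P_{T_n}(A)$.

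To prove the claim, expand $\det\!\left(L_{[n]\setminus\{i\}}\right)=\sum_{\tau}\sgn(\tau)\prod_{j\ne i}L_{j,\tau(j)}$ over permutations $\tau$ of $[n]\setminus\{i\}$, then distribute each diagonal factor $L_{jj}=\sum_{\ell\ne j}a_{j\ell}$ across the sum; concretely, for each fixed point $j$ of $\tau$ choose some $g(j)\ne j$. Each resulting term is $\pm\prod_{j\ne i}a_{j,h(j)}$, where $h(j):=\tau(j)$ if $\tau(j)\ne j$ and $h(j):=g(j)$ otherwise, and a short computation shows the sign is $(-1)^{c}$ with $c$ the number of cycles of $\tau$ on its non-fixed-point set $N$ — equivalently the number of cycles of $G_h$ contained in $N$. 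Now fix $h$ and collect all pairs $(\tau,g)$ yielding its monomial: one checks that $\tau$ is admissible exactly when $N$ is $h$-invariant with $h|_N$ a bijection, i.e. when $N$ is a union of directed cycles of $G_h$, and that each such $N$ is realized by a unique $(\tau,g)$. Hence the coefficient of $\prod_{j\ne i}a_{j,h(j)}$ equals $\sum_{S}(-1)^{|S|}$ summed over subsets $S$ of the cycle set of $G_h$, which is $(1-1)^{\#\{\text{cycles of }G_h\}}$: this is $0$ when $G_h$ has a cycle and $1$ when it is acyclic. A function $h\colon[n]\setminus\{i\}\to[n]$ with $h(j)\ne j$ is acyclic precisely when every vertex reaches $i$ under iteration, i.e. when $G_h$ is an in-arborescence rooted at $i$, which is the claim.

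The main obstacle is the bookkeeping in this cancellation: correctly identifying each expanded monomial with a function $h$, evaluating its sign as $(-1)^{c}$, and verifying the correspondence ``admissible $\tau$ $\Longleftrightarrow$ union of cycles of $G_h$'' so that the coefficient collapses to the binomial sum $(1-1)^{\#\{\text{cycles}\}}$. Once that correspondence is pinned down the remaining steps — adjoining the loop via $a_{i,i}$ and summing over the root $i$ — are purely formal. One could instead simply cite Tutte's classical directed matrix-tree theorem for the arborescence count and carry out only those last two steps; we prefer the self-contained derivation because its cancellation mechanism, monomials vanishing in groups governed by a binomial identity, is exactly the mechanism that drove the combinatorial proof of the Hamiltonian cycle identity in Section~\ref{s:cproof}.
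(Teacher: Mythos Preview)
Your proof is correct, and in fact goes well beyond what the paper does: the paper does not prove TDMTT at all. It states the result as a known theorem and writes ``In lieu of a proof we examine the case $n=3$,'' then works out the example to illustrate how non-tree monomials cancel. So there is no proof in the paper to compare against; your argument supplies one.

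That said, the mechanism you use is precisely the one the paper's $n=3$ example is meant to suggest. The colored cancellations in the example (e.g.\ $\textcolor{red}{a_{23}a_{32}}-\textcolor{red}{a_{23}a_{32}}$) are instances of your general observation that, for a fixed function $h$, the admissible supports $N$ of $\tau$ biject with subsets $S$ of the cycle set of $G_h$, contributing $(-1)^{|S|}$ and summing to $(1-1)^{\#\{\text{cycles}\}}$. Your sign computation $\sgn(\tau)\cdot(-1)^{|N|}=(-1)^{|N|-c}\cdot(-1)^{|N|}=(-1)^{c}$ is right, and the identification of acyclic $h\colon[n]\setminus\{i\}\to[n]$ with in-arborescences rooted at $i$ is the standard one. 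The final two steps---multiplying by $a_{i,i}$ and summing over $i$---are, as you say, purely formal. Your closing remark that this cancellation parallels the combinatorial proof in Section~\ref{s:cproof} is apt and in the spirit of the paper.
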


\begin{ex}
    In lieu of a proof we examine the case $n=3$.
    \[
        \diag(A\boldsymbol{1})-A
        =
        \begin{pmatrix}
            a_{12}+a_{13}   & -a_{12}       & -a_{13}       \\
            -a_{21}         & a_{21}+a_{23} & -a_{23}       \\
            -a_{31}         & -a_{32}       & a_{31}+a_{32} \\
        \end{pmatrix}
    \]
    \[
        \begin{matrix}
            P_{T_3}(A) & = & a_{11}[(a_{21}+a_{23})(a_{31}+a_{32})-(-a_{23})(-a_{32})] \\
            & & + a_{22}[(a_{12}+a_{13})(a_{31}+a_{32})-(-a_{13})(-a_{31})] \\
            
            & & + a_{33}[(a_{12}+a_{13})(a_{21}+a_{23})-(-a_{12})(-a_{21})] \\ \\
            
            & = & a_{11}[a_{21}a_{31}+a_{21}a_{32}+a_{23}a_{31}+\textcolor{red}{a_{23}a_{32}}-\textcolor{red}{a_{23}a_{32}}] \\
            & & + a_{22}[a_{12}a_{31}+a_{12}a_{32}+\textcolor{red}{a_{13}a_{31}}+a_{13}a_{32}-\textcolor{red}{a_{13}a_{31}}] \\
            & & + a_{33}[\textcolor{red}{a_{12}a_{21}}+a_{12}a_{23}+a_{13}a_{21}+a_{13}a_{23}-\textcolor{red}{a_{12}a_{21}}] \\ \\
            
            & = & \textcolor{magenta}{a_{11}a_{21}a_{31}}+\textcolor{teal}{a_{11}a_{21}a_{32}}+\textcolor{teal}{a_{11}a_{23}a_{31}} \\
            & & + \textcolor{teal}{a_{22}a_{12}a_{31}}+\textcolor{magenta}{a_{22}a_{12}a_{32}}+\textcolor{teal}{a_{22}a_{13}a_{32}} \\
            & & + \textcolor{teal}{a_{33}a_{12}a_{23}}+\textcolor{teal}{a_{33}a_{13}a_{21}}+\textcolor{magenta}{a_{33}a_{13}a_{23}}
        \end{matrix}
    \]
    Terms in the final expression are colored according to their conjugacy class membership. The graphs below depict these conjugacy classes.
    \[
        \begin{matrix}
            \begin{tikzpicture}
                \node (1) at (0,0) {};
                \node (2) at (2,0) {};
                \node (3) at (4,0) {};
                \draw[fill=black] (0,0) circle (3pt);
                \draw[fill=black] (2,0) circle (3pt);
                \draw[fill=black] (4,0) circle (3pt);
                \draw (3) edge[color=magenta,very thick,<-,out=45,in=-45,looseness=10] (3);
                \draw (2) edge[color=magenta,very thick,->,out=-45,in=225,looseness=1] (3);
                \draw (3) edge[color=magenta,very thick,<-,out=135,in=45, looseness=1] (1);
            \end{tikzpicture} & \hspace{25pt} & 
            \begin{tikzpicture}
                \node (1) at (0,0) {};
                \node (2) at (2,0) {};
                \node (3) at (4,0) {};
                \draw[fill=black] (0,0) circle (3pt);
                \draw[fill=black] (2,0) circle (3pt);
                \draw[fill=black] (4,0) circle (3pt);
                \draw (3) edge[color=teal,very thick,<-,out=45,in=-45,looseness=10] (3);
                \draw (2) edge[color=teal,very thick,->,out=-45,in=225,looseness=1] (3);
                \draw (1) edge[color=teal,very thick,->,out=45,in=135, looseness=1] (2);
            \end{tikzpicture}
        \end{matrix}
    \]
    Intuitively, each term $a_{i,i}\det(\diag(A\boldsymbol{1}-A))$ lists all functional graphs with a loop at vertex $i$, but any graphs with additional cycles appear the same number of times with positive and negative sign and thus cancel.
\end{ex}

We next present a formula for the determinant of a diagonal perturbation of a matrix. Although we are unable to find a published proof of this fact we believe the result is known.

\begin{lem}
    Given an $n\times n$ matrix $A$, the \textbf{determinant sum lemma} states
    \[
        \det(A+\diag(\boldsymbol{x}))=\sum_{S\subseteq[n]}\det(A_{S})\det(\diag(\boldsymbol{x})_{[n]\setminus S})
    \]
    where $A_S$ denotes the principle submatrix of $A$ indexed by $S$.
\end{lem}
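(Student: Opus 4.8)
The plan is to prove the determinant sum lemma by expanding $\det(A+\diag(\boldsymbol{x}))$ through multilinearity of the determinant in its columns. Write $A+\diag(\boldsymbol{x})$ column by column: its $j$-th column is $A_{\cdot,j}+x_j\boldsymbol{e}_j$, where $A_{\cdot,j}$ is the $j$-th column of $A$ and $\boldsymbol{e}_j$ is the $j$-th standard basis vector. Since the determinant is linear in each column separately, splitting every column into its two summands expands the determinant into $2^n$ terms indexed by the subsets $T\subseteq[n]$ that record which columns contribute their $x_j\boldsymbol{e}_j$ piece:
\[
    \det(A+\diag(\boldsymbol{x})) = \sum_{T\subseteq[n]}\det\bigl(M^{(T)}\bigr),
\]
where $M^{(T)}$ has $j$-th column $x_j\boldsymbol{e}_j$ for $j\in T$ and $A_{\cdot,j}$ for $j\notin T$.

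The next step is to evaluate $\det\bigl(M^{(T)}\bigr)$ for fixed $T$ by the generalized Laplace expansion along the columns indexed by $T$. For $j\in T$ the $j$-th column of $M^{(T)}$ is supported on row $j$ alone, so the only row-subset of size $|T|$ indexing a nonzero minor is $T$ itself; that minor is $\det\diag(x_j:j\in T)=\prod_{j\in T}x_j$, its complementary minor is precisely $\det(A_{[n]\setminus T})$, and the Laplace sign $(-1)^{\sum_{i\in T}i+\sum_{j\in T}j}$ equals $1$. Hence $\det\bigl(M^{(T)}\bigr)=\bigl(\prod_{j\in T}x_j\bigr)\det(A_{[n]\setminus T})$. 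Reindexing by $S:=[n]\setminus T$ and observing that $\prod_{j\in[n]\setminus S}x_j=\det\bigl(\diag(\boldsymbol{x})_{[n]\setminus S}\bigr)$ gives exactly the asserted identity.

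I expect the only real friction to be the bookkeeping in the Laplace step --- cleanly justifying that no row-subset other than $T$ survives and that the cofactor sign is trivial --- together with the cosmetic reindexing $S\leftrightarrow[n]\setminus S$. An alternative route that sits more comfortably with this paper's symbolic-listing viewpoint is a direct Leibniz-formula argument: in $\sum_{\sigma\in\Sn}\sgn(\sigma)\prod_{i\in[n]}(A+\diag(\boldsymbol{x}))_{i,\sigma(i)}$ the perturbation alters only the factors at fixed points of $\sigma$, so expanding $\prod_{i\in\fix(\sigma)}(a_{ii}+x_i)$, letting a subset $U\subseteq\fix(\sigma)$ record which fixed points contribute their $x_i$, and then reorganizing the sum first over $U\subseteq[n]$ and afterward over permutations of $[n]\setminus U$ (using that deleting fixed points does not change the sign of a permutation) reproduces $\sum_{U\subseteq[n]}\bigl(\prod_{i\in U}x_i\bigr)\det(A_{[n]\setminus U})$. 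Either way the content is a careful grouping of terms, not a serious computation.
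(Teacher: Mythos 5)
Your primary argument (multilinearity in the columns, then generalized Laplace expansion along the columns indexed by $T$) is correct, and it is a genuinely different route from the paper's. The paper works entirely from the Leibniz formula: it expands $\prod_{i\in\fix(\sigma)}(a_{ii}+x_i)$ over subsets of the fixed-point set of each permutation, interchanges the order of summation, and identifies a permutation of $[n]$ fixing $S$ pointwise with a permutation of $[n]\setminus S$ of the same sign --- exactly the ``alternative route'' you sketch in your last paragraph, which is indeed the one that harmonizes with the paper's symbolic-listing viewpoint (diagonal perturbations touch only loop edges, i.e.\ fixed points). Your column-splitting argument buys brevity and conceptual clarity: the $2^n$ terms appear immediately from multilinearity, and the Laplace step is genuinely as painless as you predict, since each column $x_j\boldsymbol{e}_j$ forces the row set to equal $T$ and the sign $(-1)^{2\sum_{j\in T}j}$ is trivially $1$ (one can even avoid citing generalized Laplace by iterated cofactor expansion along each column $j\in T$, picking up $(-1)^{j+j}x_j$ each time). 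What it costs is the reliance on multilinearity and Laplace as black boxes, whereas the paper's proof uses nothing beyond the definition of $\det$ and so keeps the combinatorial bookkeeping --- which permutations contribute which monomials --- fully visible, in keeping with the rest of the article. Both proofs are complete; yours is arguably the more standard linear-algebra derivation of this known identity.
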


\begin{proof}
    By the definition of the determinant,
    \[
        \det(A+\diag(\boldsymbol{x}))=\sum_{\sigma\in\Sn}\sgn(\sigma)\prod_{i\in[n]}(A+\diag(\boldsymbol{x}))_{i,\sigma(i)}.
    \]
    Observe that
    \[
        (A+\diag(\boldsymbol{x}))_{i,\sigma(i)}=
        \begin{cases}
            a_{i,\sigma(i)} & i\neq \sigma(i) \\
            a_{i,\sigma(i)}+x_{\sigma(i)} & i=\sigma(i),
        \end{cases}
    \]
    so letting $\fix(\sigma)$ denote the set of fixed points of $\sigma$,
    \[
        \det(A+\diag(\boldsymbol{x}))=\sum_{\sigma\in\Sn}\sgn(\sigma)\prod_{i\not\in \fix(\sigma)}a_{i,\sigma(i)}\prod_{i\in\fix(\sigma)}(a_{i,\sigma(i)}+x_{\sigma(i)}).
    \]
    Now we expand
    \[
        \prod_{i\in\fix(\sigma)}(a_{i,\sigma(i)}+x_{\sigma(i)})=\sum_{S\subseteq\fix(\sigma)}\prod_{i\in\fix(\sigma)\setminus S}a_{i,\sigma(i)}\prod_{i\in S}x_{\sigma(i)}
    \]
    to write
    \[
        \det(A+\diag(\boldsymbol{x}))=\sum_{\sigma\in\Sn}\sgn(\sigma)\prod_{i\not\in \fix(\sigma)}a_{i,\sigma(i)}\sum_{S\subseteq\fix(\sigma)}\prod_{i\in\fix(\sigma)\setminus S}a_{i,\sigma(i)}\prod_{i\in S}x_{\sigma(i)},
    \]
    and pulling out the inner sum,
    \[
        \det(A+\diag(\boldsymbol{x}))=\sum_{\sigma\in\Sn}\sum_{S\subseteq\fix(\sigma)}\sgn(\sigma)\prod_{i\not\in \fix(\sigma)}a_{i,\sigma(i)}\prod_{i\in\fix(\sigma)\setminus S}a_{i,\sigma(i)}\prod_{i\in S}x_{\sigma(i)}.
    \]
    Of course for $i\in\fix(\sigma)$, $i=\sigma(i)$, so substituting $x_i=x_{\sigma(i)}$ and combining the products of $a_{i,\sigma(i)}$,
    \[
        \det(A+\diag(\boldsymbol{x}))=\sum_{\sigma\in\Sn}\sum_{S\subseteq\fix(\sigma)}\sgn(\sigma)\prod_{i\in[n]\setminus S}a_{i,\sigma(i)}\prod_{i\in S}x_i.
    \]
    Next, switching the order of summation,
    \[
        \det(A+\diag(\boldsymbol{x}))=\sum_{S\subseteq[n]}\sum_{\substack{\sigma\in\Sn\\\fix(\sigma)\supseteq S}}\sgn(\sigma)\prod_{i\in[n]\setminus S}a_{i,\sigma(i)}\prod_{i\in S}x_i.
    \]
    The key here is to regard $\sigma\in\Sn$ with $S\subseteq\fix(\sigma)$ simply as a permutation on $[n]\setminus S$ since it fixes all elements of $S$. Denoting the set of permutations on $[n]\setminus S$ by $\sym([n]\setminus S)$,
    \[
        \det(A+\diag(\boldsymbol{x}))=\sum_{S\subseteq[n]}\left(\sum_{\sigma\in\sym([n]\setminus S)}\sgn(\sigma)\prod_{i\in[n]\setminus S}a_{i,\sigma(i)}\right)\prod_{i\in S}x_i,
    \]
    and recognizing the inner sum as the determinant of a submatrix of $A$ and the product on the right as the determinant of a submatrix of $\diag(\boldsymbol{x})$,
    \[
        \det(A+\diag(\boldsymbol{x}))=\sum_{S\subseteq[n]}\det(A_{[n]\setminus S})\prod_{i\in S}x_i
        =\sum_{S\subseteq[n]}\det(A_{[n]\setminus S})\det(\diag(\boldsymbol{x})_{S}).
    \]
    Finally, by symmetry
    \[
        \det(A+\diag(\boldsymbol{x}))=\sum_{S\subseteq[n]}\det(A_{S})\det(\diag(\boldsymbol{x})_{[n]\setminus S}).
    \]
\end{proof}

\begin{defn}
    For a multiset $S=\{s_i\}_{i=1}^k$ whose elements are drawn from $[n]$, let
    \[
        \boldsymbol{\partial}_S:=\frac{\partial}{\partial x_{s_1}}\frac{\partial}{\partial x_{s_2}}\hdots\frac{\partial}{\partial x_{s_k}},
    \]
    which we refer to as a differential operator of degree $|S|$. For example,
    \[
        \boldsymbol{\partial}_{[n]}=\frac{\partial}{\partial x_{1}}\frac{\partial}{\partial x_{2}}\hdots\frac{\partial}{\partial x_{n}}.
    \]
\end{defn}

\begin{lem}
    Recall the multivariable product rule, which states
    \[
        \boldsymbol{\partial}_{[n]}P(\boldsymbol{x})Q(\boldsymbol{x})
        =\sum_{S\subseteq [n]}\boldsymbol{\partial}_{S}P(\boldsymbol{x})\cdot\boldsymbol{\partial}_{[n]\setminus S}Q(\boldsymbol{x}).
    \]
\end{lem}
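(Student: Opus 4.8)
The plan is to prove this by induction on $n$, using only the ordinary single-variable product rule together with the fact that the mixed partial derivatives of a polynomial commute (so that $\boldsymbol{\partial}_{[n]}$ is well defined irrespective of the order in which the individual derivatives are applied). For the base case $n=1$ the claim is exactly $\frac{\partial}{\partial x_1}\big(P(\boldsymbol{x})Q(\boldsymbol{x})\big)=\big(\frac{\partial}{\partial x_1}P(\boldsymbol{x})\big)Q(\boldsymbol{x})+P(\boldsymbol{x})\big(\frac{\partial}{\partial x_1}Q(\boldsymbol{x})\big)$, which is the familiar product rule; here the two summands correspond to the two subsets $S=\{1\}$ and $S=\varnothing$ of $[1]$.

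For the inductive step I would assume the identity holds for the index set $[n-1]$ and write $\boldsymbol{\partial}_{[n]}\big(P(\boldsymbol{x})Q(\boldsymbol{x})\big)=\frac{\partial}{\partial x_n}\,\boldsymbol{\partial}_{[n-1]}\big(P(\boldsymbol{x})Q(\boldsymbol{x})\big)$. Applying the inductive hypothesis to the inner operator expresses this as $\frac{\partial}{\partial x_n}\sum_{S\subseteq[n-1]}\boldsymbol{\partial}_{S}P(\boldsymbol{x})\cdot\boldsymbol{\partial}_{[n-1]\setminus S}Q(\boldsymbol{x})$. Differentiating the finite sum term by term with the single-variable product rule, and noting that $[n]\setminus S=([n-1]\setminus S)\cup\{n\}$ for $S\subseteq[n-1]$, each term contributes the pair $\boldsymbol{\partial}_{S\cup\{n\}}P(\boldsymbol{x})\cdot\boldsymbol{\partial}_{[n-1]\setminus S}Q(\boldsymbol{x})$ and $\boldsymbol{\partial}_{S}P(\boldsymbol{x})\cdot\boldsymbol{\partial}_{[n]\setminus S}Q(\boldsymbol{x})$.

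The last step is the bookkeeping observation that every $T\subseteq[n]$ occurs exactly once across this doubled collection of terms: if $n\notin T$ then $T=S$ for a unique $S\subseteq[n-1]$ and $T$ is produced by the second term (with partner $\boldsymbol{\partial}_{[n]\setminus T}Q$); if $n\in T$ then $T=S\cup\{n\}$ for a unique $S\subseteq[n-1]$ and $T$ is produced by the first term (whose partner $\boldsymbol{\partial}_{[n-1]\setminus S}Q$ is precisely $\boldsymbol{\partial}_{[n]\setminus T}Q$). Hence the whole expression collapses to $\sum_{T\subseteq[n]}\boldsymbol{\partial}_{T}P(\boldsymbol{x})\cdot\boldsymbol{\partial}_{[n]\setminus T}Q(\boldsymbol{x})$, completing the induction.

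There is no genuine obstacle here; the only point requiring a little care is the reindexing of subsets in the final step, which is purely combinatorial. I note that one could alternatively bypass the induction by expanding $P$ and $Q$ into monomials, reducing to the case of a single pair of monomials and applying the one-variable rule in each coordinate, but the induction above is the cleanest route to write out.
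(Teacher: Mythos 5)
Your proof is correct and takes essentially the same route as the paper: induction on $n$, reducing to the single-variable product rule and then reindexing the subsets of $[n]$ according to whether they contain $n$. The only cosmetic difference is that you apply the inductive hypothesis before peeling off $\partial/\partial x_n$ while the paper peels it off first, which is immaterial since the partial derivatives commute.
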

\begin{proof}
    When $n=1$ we have the product rule for partial derivatives,
    \[
        \frac{\partial}{\partial x_1}P(\boldsymbol{x})Q(\boldsymbol{x})=\left(\frac{\partial}{\partial x_1}P(\boldsymbol{x})\right)Q(\boldsymbol{x})+P(\boldsymbol{x})\left(\frac{\partial}{\partial x_1}Q(\boldsymbol{x})\right).
    \]
    Now suppose the claim holds for all $n\in[k-1]$. Then
    \[
        \boldsymbol{\partial}_{[k]}P(\boldsymbol{x})Q(\boldsymbol{x})=\boldsymbol{\partial}_{[k-1]}\left(\frac{\partial}{\partial x_k}P(\boldsymbol{x})Q(\boldsymbol{x})\right)
    \]
    \[
        =\boldsymbol{\partial}_{[k-1]}\left[\left(\frac{\partial}{\partial x_k}P(\boldsymbol{x})\right)Q(\boldsymbol{x})+P(\boldsymbol{x})\left(\frac{\partial}{\partial x_k}Q(\boldsymbol{x})\right)\right]
    \]
    \[
        =\boldsymbol{\partial}_{[k-1]}\left(\frac{\partial}{\partial x_k}P(\boldsymbol{x})\right)Q(\boldsymbol{x})+\boldsymbol{\partial}_{[k-1]}P(\boldsymbol{x})\left(\frac{\partial}{\partial x_k}Q(\boldsymbol{x})\right)
    \]
    \[
        =\sum_{S\subseteq [k-1]}\boldsymbol{\partial}_{S}\frac{\partial}{\partial x_k}P(\boldsymbol{x})\cdot\boldsymbol{\partial}_{[k-1]\setminus S}Q(\boldsymbol{x})
        +\sum_{T\subseteq [k-1]}\boldsymbol{\partial}_{T}P(\boldsymbol{x})\cdot\boldsymbol{\partial}_{[k-1]\setminus T}\frac{\partial}{\partial x_k}Q(\boldsymbol{x})
    \]
    \[
        =\sum_{S\subseteq [k]}\boldsymbol{\partial}_{S}P(\boldsymbol{x})\cdot\boldsymbol{\partial}_{[k]\setminus S}Q(\boldsymbol{x}),
    \]
    which completes induction.
\end{proof}

\section{Symbolic Proof}\label{s:sproof}

\begin{lem}\label{lem:PHn}
    The following expression is a symbolic listing of Hamiltonian cycles on $[n]$.
    \[
        P_{HC_n}(A)=\boldsymbol{\partial}_{[n]}\sum_{j\in[n]}a_{n,j}x_j\det(\diag(A\boldsymbol{x})-A\diag(\boldsymbol{x}))_{[n-1]}
    \]
\end{lem}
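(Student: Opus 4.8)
The plan is to read each factor on the right-hand side as a symbolic listing of a family of graphs, just as TDMTT lists rooted trees, and then to show that the operator $\boldsymbol{\partial}_{[n]}$ kills every term except those coming from Hamiltonian cycles.

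First I would rewrite the inner determinant in a form to which TDMTT applies. Put $B:=A\diag(\boldsymbol{x})$, so that $b_{i,j}=a_{i,j}x_j$; then $\diag(B\boldsymbol{1})=\diag(A\boldsymbol{x})$ and $B=A\diag(\boldsymbol{x})$, whence
\[
\det\bigl(\diag(A\boldsymbol{x})-A\diag(\boldsymbol{x})\bigr)_{[n-1]}=\det\bigl(\diag(B\boldsymbol{1})-B\bigr)_{[n]\setminus\{n\}}.
\]
Since TDMTT is an identity in the symbolic entries of a matrix, substituting $a_{i,j}\mapsto a_{i,j}x_j$ into its cofactor $\det(\diag(A\boldsymbol{1})-A)_{[n]\setminus\{n\}}$ shows that the determinant above is the symbolic listing of spanning trees $T$ on $[n]$ directed toward the root $n$ (each non-root vertex $u$ has out-degree $1$, pointing to its parent $p(u)$), where the edge $u\to p(u)$ contributes the weight $a_{u,p(u)}\,x_{p(u)}$. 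Multiplying by $\sum_{j\in[n]}a_{n,j}x_j$ attaches one further out-edge $n\to j$ at the root, so the argument of $\boldsymbol{\partial}_{[n]}$ is the symbolic listing of the family of functional graphs $G=T\cup\{(n,j)\}$, each contributing the monomial
\[
\Bigl(\prod_{u\neq n}a_{u,p(u)}\,x_{p(u)}\Bigr)\,a_{n,j}\,x_j;
\]
such a $G$ is ``$\rho$-shaped'', a single directed cycle through $n$ with in-trees hanging off it.

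The key observation is to read off the $\boldsymbol{x}$-content of such a monomial. The exponent of $x_v$ in it is the number of $u\neq n$ with $p(u)=v$, plus $1$ if $v=j$, which is exactly the in-degree of $v$ in the functional graph $G$. So the $a$-part of the monomial of $G$ is $\prod_{i\in[n]}a_{i,G(i)}$, while its $\boldsymbol{x}$-part is $\prod_{v\in[n]}x_v^{e_v}$ with $e_v$ the in-degree of $v$ in $G$ and $\sum_{v}e_v=n$. Now $\boldsymbol{\partial}_{[n]}$ annihilates a monomial of $\boldsymbol{x}$-degree $n$ unless every $x_v$ occurs in it, and total degree $n$ with every exponent at least $1$ forces every exponent to equal $1$, in which case the monomial is sent to its $a$-part with coefficient $1$. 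Hence $\boldsymbol{\partial}_{[n]}$ applied to the right-hand side is the symbolic listing of those graphs $G=T\cup\{(n,j)\}$ all of whose in-degrees equal $1$. (Alternatively one may first apply the multivariable product rule to the product $\bigl(\sum_j a_{n,j}x_j\bigr)\cdot\det(\cdots)_{[n-1]}$ and reach the same splitting.)

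Finally I would identify this family with $HC_n$. Such a $G$ is connected, being a spanning tree plus one edge, and has out-degree sequence and in-degree sequence both equal to $(1,\dots,1)$; a disjoint union of directed cycles that is connected is a single spanning cycle, i.e. a Hamiltonian cycle. Conversely each $c\in HC_n$ arises exactly once, via $j=c(n)$ and $T=G_c\setminus\{(n,j)\}$, which is a Hamiltonian path from $j$ to $n$ and hence a spanning tree directed toward $n$. Since the $a$-part of the monomial of $G_c$ is $\prod_{i\in[n]}a_{i,c(i)}$, this yields $\boldsymbol{\partial}_{[n]}(\text{RHS})=\sum_{c\in HC_n}\prod_{i\in[n]}a_{i,c(i)}=P_{HC_n}(A)$. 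The main obstacle is the second paragraph: justifying the edge-weighted form of TDMTT by a clean substitution argument, and correctly recognizing that the $\boldsymbol{x}$-content of each term records the in-degree sequence of the associated functional graph. Once that dictionary is in place, the behavior of $\boldsymbol{\partial}_{[n]}$ and the final identification with $HC_n$ are routine.
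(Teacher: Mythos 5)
Your proof is correct and follows essentially the same route as the paper: apply TDMTT to $A\diag(\boldsymbol{x})$ so that the $\boldsymbol{x}$-variables record the in-degree sequence of each functional graph, then use $\boldsymbol{\partial}_{[n]}$ to annihilate every monomial whose in-degree sequence is not $(1,1,\hdots,1)$. The only organizational difference is that you fix the root at $n$ from the outset and establish the coefficient-one claim via an explicit bijection between Hamiltonian cycles and pairs (arborescence converging to $n$, out-edge at $n$), whereas the paper first lists unicyclic graphs over all roots $i$ and restricts to $i=n$ at the end to repair the multiplicities; both arguments resolve the same subtlety.
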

\begin{proof}
    TDMTT states
    \[
        P_{T_n}=\sum_{i\in[n]}a_{i,i}\det(\diag(A\boldsymbol{1})-A)_{[n]\setminus\{i\}}
    \]
    is a symbolic listing of rooted functional trees on $[n]$. Therefore
    \[
        \tilde P_{UC_n}(A)=\sum_{i\in[n]}\sum_{j\in[n]}a_{i,j}\det(\diag(A\boldsymbol{1})-A)_{[n]\setminus\{i\}}
    \]
    lists unicyclic graphs on $[n]$, since replacing the loop edge $(i,i)$ with any edge $(i,j)$ induces exactly one cycle in the graph of each monomial. Note however, that the coefficients in the above expression need not be one. Now consider
    \[
        \tilde P_{UC_n}(A\diag(\boldsymbol{x}))=\sum_{i\in[n]}\sum_{j\in[n]}a_{i,j}x_j\det(\diag(A\boldsymbol{x})-A\diag(\boldsymbol{x}))_{[n]\setminus\{i\}}
    \]
    which coincides with the image of $\tilde P_{UC_n}(A)$ after the change of variable $a_{i,j}\leftarrow a_{i,j}x_j$. In this expression, the variables of $\boldsymbol{x}$ record the in-degree sequence of each monomial. For example, the monomial $a_{12}a_{22}a_{32}$ has in-degree sequence $(1,2,0)$, and after the change of variable $a_{i,j}\leftarrow a_{i,j}x_j$ it appears with a factor of $x_1^1x_2^2x_3^0$. Thus if we take a partial derivative of $\tilde P_{UC_n}(A\diag(\boldsymbol{x}))$ with respect to $x_i$ for each $i\in[n]$, any monomial with an in-degree sequence containing a zero will vanish. Further, since $\tilde P_{UC_n}$ is homogeneous of degree $n$ in the variables of $\boldsymbol{x}$, its monomials have an in-degree sequence containing no zeros if and only if their degree sequence is $(1,1,\hdots,1)$. Then because functional graphs which are unicyclic and have in-degree sequence $(1,1,\hdots,1)$ are Hamiltonian cycles,
    \[
        \tilde P_{HC_n}(A)=\boldsymbol{\partial}_{[n]}\sum_{i\in[n]}\sum_{j\in[n]}a_{i,j}x_{j}\det(\diag(A\boldsymbol{x})-A\diag(\boldsymbol{x}))_{[n]\setminus\{i\}}
    \]
    lists Hamiltonian cycles on $[n]$. Finally, we modify the above expression to achieve coefficients of one. Observe that a monomial $M_{HC}$ of $\tilde P_{HC_n}$ is generated by a monomial $M_{T}$ of $P_{T_n}$ exactly when $M_{T}$ is the edge monomial of a functional path along the single spanning cycle of the graph of $M_{HC}$. Exactly one such $M_{T}$ is rooted at vertex $n$, so we restrict the above expression to $i=n$, yielding the desired expression for $P_{HC_n}(A)$. For example let $n=2$. Then the monomial $M_{HC}$ of the Hamiltonian cycle
    \[
        \begin{tikzpicture}
            \node (1) at (0,0) {};
            \node (2) at (2,0) {};
            \draw[fill=black] (0,0) circle (3pt);
            \draw[fill=black] (2,0) circle (3pt);
            \node at (0,-0.5) {$1$};
            \node at (2,-0.5) {$2$};
            \draw (1) edge[color=black,very thick,->,out=315,in=225,looseness=1] (2);
            
            \draw (2) edge[color=black,very thick,->,out=135,in=45,looseness=1] (1);
        \end{tikzpicture}
    \]
    is generated by the monomials $M_{T_1}$ and $M_{T_2}$ of the following functional paths, rooted at vertex $1$ and vertex $2$ respectively.
    \[
        \begin{array}{ccc}
            \begin{tikzpicture}
                \node (1) at (0,0) {};
                \node (2) at (2,0) {};
                \draw[fill=black] (0,0) circle (3pt);
                \draw[fill=black] (2,0) circle (3pt);
                \node at (0,-0.5) {$1$};
                \node at (2,-0.5) {$2$};
                \draw (1) edge[color=black,very thick,->,out=225,in=135,looseness=10] (1);
                
                \draw (2) edge[color=black,very thick,->,out=135,in=45,looseness=1] (1);
            \end{tikzpicture} & \hspace{25pt} & \begin{tikzpicture}
                \node (1) at (0,0) {};
                \node (2) at (2,0) {};
                \draw[fill=black] (0,0) circle (3pt);
                \draw[fill=black] (2,0) circle (3pt);
                \node at (0,-0.5) {$1$};
                \node at (2,-0.5) {$2$};
                \draw (1) edge[color=black,very thick,->,out=315,in=225,looseness=1] (2);
                
                \draw (2) edge[color=black,very thick,->,out=45,in=315,looseness=10] (2);
            \end{tikzpicture}
        \end{array}
    \]
    By restricting the expression for $\tilde P_{HC_2}(A)$ to $i=2$, we ensure $M_{HC}$ is only generated by $M_{T_2}$ and thus has coefficient one.

\end{proof}

\begin{thm*}
    The Hamiltonian cycle polynomial satisfies the identity
    \[
        P_{HC_n}(A)=\sum_{S\subseteq [n-1]}\det(-A_S)\per(A_{[n]\setminus S}),
    \]
    where $A_S$ denotes the principle submatrix of $A$ indexed by $S$, and $\det(A_{\emptyset}):=1$.
\end{thm*}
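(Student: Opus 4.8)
The plan is to start from Lemma~\ref{lem:PHn}, which already expresses $P_{HC_n}(A)$ as $\boldsymbol{\partial}_{[n]}$ applied to the product of the vertex-$n$ linear form $\sum_{j\in[n]}a_{n,j}x_j$ with the principal minor $\det(\diag(A\boldsymbol{x})-A\diag(\boldsymbol{x}))_{[n-1]}$, and to evaluate this expression by expanding the minor with the determinant sum lemma. The crucial observation is that $\diag(A\boldsymbol{x})-A\diag(\boldsymbol{x})$, restricted to $[n-1]$, is a diagonal perturbation of $-(A\diag(\boldsymbol{x}))_{[n-1]}$: its diagonal part is $\diag(\boldsymbol{y})_{[n-1]}$ where $y_i=(A\boldsymbol{x})_i=\sum_{j\in[n]}a_{i,j}x_j$. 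So I would apply the determinant sum lemma (a polynomial identity valid for arbitrary matrix entries, hence legitimately specialized to entries involving the $x_j$) with index set $[n-1]$, getting
\[
\det\big(\diag(A\boldsymbol{x})-A\diag(\boldsymbol{x})\big)_{[n-1]}=\sum_{S\subseteq[n-1]}\det\big(-(A\diag(\boldsymbol{x}))_S\big)\prod_{i\in[n-1]\setminus S}(A\boldsymbol{x})_i.
\]

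Next I would simplify. Since $(A\diag(\boldsymbol{x}))_S=A_S\,\diag(\boldsymbol{x})_S$, we have $\det(-(A\diag(\boldsymbol{x}))_S)=(-1)^{|S|}\det(A_S)\prod_{j\in S}x_j$. Multiplying through by $\sum_{k\in[n]}a_{n,k}x_k=(A\boldsymbol{x})_n$ and using $([n-1]\setminus S)\cup\{n\}=[n]\setminus S$ (valid because $S\subseteq[n-1]$, so $n\notin S$), the quantity inside $\boldsymbol{\partial}_{[n]}$ becomes
\[
\sum_{S\subseteq[n-1]}(-1)^{|S|}\det(A_S)\Big(\prod_{j\in S}x_j\Big)\prod_{i\in[n]\setminus S}\Big(\sum_{j\in[n]}a_{i,j}x_j\Big).
\]
Each summand is homogeneous of degree $n$ in $x_1,\dots,x_n$, so $\boldsymbol{\partial}_{[n]}$ acts on it simply by extracting the coefficient of the squarefree monomial $x_1\cdots x_n$ (the unique degree-$n$ monomial in $n$ variables that it does not annihilate); this is the symbolic form of the degree-sequence argument already used to prove Lemma~\ref{lem:PHn}, and one could equivalently reach this point by first pushing $\boldsymbol{\partial}_{[n]}$ through the product via the multivariable product rule. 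For a fixed $S$ the factor $\prod_{j\in S}x_j$ supplies exactly the variables indexed by $S$, so what survives is the coefficient of $\prod_{i\in[n]\setminus S}x_i$ in $\prod_{i\in[n]\setminus S}\big(\sum_{j\in[n]}a_{i,j}x_j\big)$; expanding, a choice of $x_{\phi(i)}$ from the $i$th factor contributes to that monomial precisely when $\phi$ is a bijection of $[n]\setminus S$, so the coefficient is $\sum_{\phi\in\sym([n]\setminus S)}\prod_{i\in[n]\setminus S}a_{i,\phi(i)}=\per(A_{[n]\setminus S})$.

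Assembling these pieces yields $P_{HC_n}(A)=\sum_{S\subseteq[n-1]}(-1)^{|S|}\det(A_S)\per(A_{[n]\setminus S})$, which is the claimed identity after rewriting $(-1)^{|S|}\det(A_S)=\det(-A_S)$ and invoking the convention $\det(A_\varnothing):=1$ for the term $S=\varnothing$. I expect the main obstacle to be the bookkeeping in the first two steps: correctly recognizing the diagonal perturbation so the determinant sum lemma applies with the right index set $[n-1]$, keeping the $x_j$-dependence of both factors straight, and tracking how multiplying by the vertex-$n$ linear form folds the index $n$ back into the complement $[n]\setminus S$. Everything downstream is routine coefficient extraction, the only conceptual point being the reinterpretation of $\boldsymbol{\partial}_{[n]}$ on a degree-$n$ form as selection of the all-ones degree sequence, exactly as in the proof of Lemma~\ref{lem:PHn}.
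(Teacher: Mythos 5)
Your argument is correct and follows essentially the same route as the paper's symbolic proof: start from Lemma~\ref{lem:PHn}, expand the principal minor via the determinant sum lemma with diagonal part $\diag(A\boldsymbol{x})_{[n-1]}$, and identify the two resulting factors as $\det(-A_S)$ and $\per(A_{[n]\setminus S})$. Your shortcut of reading $\boldsymbol{\partial}_{[n]}$ on a degree-$n$ form as extraction of the coefficient of $x_1\cdots x_n$ is a legitimate condensation of the paper's multivariable-product-rule bookkeeping (the paper's separate combinatorial proof is a genuinely different argument, but your proposal matches the symbolic one).
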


\begin{proof}
    By Lemma \ref{lem:PHn}, we have
    \[
        P_{HC_n}(A)=\boldsymbol{\partial}_{[n]}\sum_{j\in[n]}a_{n,j}x_j\cdot\det(\diag(A\boldsymbol{x})-A\diag(\boldsymbol{x}))_{[n-1]}
    \]
    is a symbolic listing of Hamiltonian cycles. Now observe
    \[
        \det(\diag(A\boldsymbol{x})-A\diag(\boldsymbol{x}))_{[n-1]}=\det(-A\diag(\boldsymbol{x}) + \diag(A\boldsymbol{x}))_{[n-1]},
    \]
    and by the determinant sum lemma
    \[
        \det(-A\diag(\boldsymbol{x}) + \diag(A\boldsymbol{x}))_{[n-1]}
    \]
    \[
        =\sum_{S\subseteq[n-1]}\det(-A\diag(\boldsymbol{x}))_S\det(\diag(A\boldsymbol{x}))_{[n-1]\setminus S}
    \]
    \[
        =\sum_{S\subseteq[n-1]}\det(-A\diag(\boldsymbol{x}))_S\per(\diag(A\boldsymbol{x}))_{[n-1]\setminus S}
    \]
    since the determinant and permanent of a diagonal matrix are the same. Plugging this in, we find $P_{HC_n}$ equals
    \[
        \boldsymbol{\partial}_{[n]}\sum_{j\in[n]}a_{n,j}x_j\sum_{S\subseteq[n-1]}\det(-A\diag(\boldsymbol{x}))_S\per(\diag(A\boldsymbol{x}))_{[n-1]\setminus S},
    \]
    which can be factored as
    \[
        \sum_{S\subseteq[n-1]}\boldsymbol{\partial}_{[n]}\det(-A\diag(\boldsymbol{x}))_S\sum_{j\in[n]}a_{n,j}x_j\per(\diag(A\boldsymbol{x}))_{[n-1]\setminus S}.
    \]
    Now fix $S\subseteq[n-1]$ and consider a single summand. Observe that the left and right hand factors above have degree $|S|$ and $n-|S|$ respectively in the variables of $\boldsymbol{x}$. By the multivariable product rule, the summand equals
    \[
        \sum_{T\subseteq[n]}\boldsymbol{\partial}_{T}\det(-A\diag(\boldsymbol{x}))_S
        \cdot\boldsymbol{\partial}_{[n]\setminus T}\sum_{j\in[n]}a_{n,j}x_j\per(\diag(A\boldsymbol{x}))_{[n-1]\setminus S}
    \]
    The left hand factor vanishes unless $T\subseteq S$. But if $|T|<|S|$ then the right hand factor vanishes since it has total degree $n-|S|$ but a degree $n-|T|>n-|S|$ differential operator. Thus the only non-vanishing term corresponds to $T=S$, and $P_{HC_n}(A)$ equals
    \[
        \sum_{S\subset[n-1]}\boldsymbol{\partial}_{S}\det(-A\diag(\boldsymbol{x}))_S \cdot \boldsymbol{\partial}_{[n]\setminus S}\sum_{j\in[n]}a_{n,j}x_j\per(\diag(A\boldsymbol{x}))_{[n-1]\setminus S}.
    \]
    Now
    \[
        \boldsymbol{\partial}_{S}\det(A\diag(\boldsymbol{x}))_S=\det(A)_S\cdot\boldsymbol{\partial}_S\det(\diag(\boldsymbol{x}))_S
    \]
    \[
        =\det(A)_S\cdot\boldsymbol{\partial}_{S}\prod_{i\in S}x_i=\det(A)_S
    \]
    which implies
    \[
        P_{HC_n}=\sum_{S\subset[n-1]}\det(-A)_S\cdot\boldsymbol{\partial}_{[n]\setminus S}\sum_{j\in[n]}a_{n,j}x_j\per(\diag(A\boldsymbol{x}))_{[n-1]\setminus S}.
    \]
    Finally, we examine the right hand factor
    \[
        \boldsymbol{\partial}_{[n]\setminus S}\sum_{j\in[n]}a_{n,j}x_j\per(\diag(A\boldsymbol{x}))_{[n-1]\setminus S}
    \]
    \[
        =\boldsymbol{\partial}_{[n]\setminus S}\sum_{j\in[n]}a_{n,j}x_j\prod_{i\in[n-1]\setminus S}\left(\sum_{k\in[n]}a_{i,k}x_{k}\right).
    \]
    Since the total degree is $n-|S|$, any non-vanishing term must have degree one in the variables of $\boldsymbol{x}$ indexed by $[n]\setminus S$ and degree zero in those indexed by $S$. Thus we can restrict the above sums to $j,k\in[n]\setminus S$ to write
    \[
        =\boldsymbol{\partial}_{[n]\setminus S}\sum_{j\in[n]\setminus S}a_{n,j}x_j\prod_{i\in[n-1]\setminus S}\left(\sum_{k\in[n]\setminus S}a_{i,k}x_{k}\right)
    \]
    \[
        =\boldsymbol{\partial}_{[n]\setminus S}\left(\prod_{i\in[n-1]\setminus S}\sum_{k\in[n]\setminus S}a_{i,k}x_{k}\right)\left(\sum_{j\in[n]\setminus S}a_{n,j}x_j\right)
    \]
    \[
        =\boldsymbol{\partial}_{[n]\setminus S}\prod_{i\in[n]\setminus S}\sum_{k\in[n]\setminus S}a_{i,k}x_{k}.
    \]
    Now we switch the order of product and summation, yielding
    \[
        =\boldsymbol{\partial}_{[n]\setminus S}\sum_{f:([n]\setminus S)\rightarrow([n]\setminus S)}\prod_{i\in[n]\setminus S}a_{i,f(i)}x_{f(i)},
    \]
    but as before, only monomials with in-degree sequence $(1,1,\hdots,1)$, i.e. permutations, will remain after the action of partial derivative operator. Thus we restrict the sum to find
    \[
        =\boldsymbol{\partial}_{[n]\setminus S}\sum_{\sigma\in\sym([n]\setminus S)}\left(\prod_{i\in[n]\setminus S}a_{i,\sigma(i)}x_{\sigma(i)}\right) =\boldsymbol{\partial}_{[n]\setminus S}\per(A\diag(\boldsymbol{x}))_{[n]\setminus S}.
    \]
    As with the determinant, 
    \[
        \boldsymbol{\partial}_{S}\per(A\diag(\boldsymbol{x}))_S=\per(A)_S\cdot\boldsymbol{\partial}_{S}\prod_{i\in S}x_i=\per(A)_S,
    \]
    so the right hand factor is equal to $\per(A_{[n]\setminus S})$, and therefore
    \[
        P_{HC_n}(A)=\sum_{S\subseteq[n-1]}\det(-A_S)\per(A_{[n]\setminus S})
    \]
    as desired.
\end{proof}

The argument above can be modified slightly to produce a similar identity for the Hamiltonian path polynomial.

\begin{thm*}
    The Hamiltonian path polynomial satisfies the identity
    \[
        P_{HP_n}(A)=\sum_{\{i,j\}\subseteq T\subseteq[n]}a_{j,j}\det(-A)_{[n]\setminus T}\per(A)_{T\setminus \{j\},T\setminus \{i\}}
    \]
\end{thm*}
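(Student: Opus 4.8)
The plan is to run the symbolic proof one more time. Throughout I read $P_{HP_n}(A)$ as the symbolic listing of Hamiltonian paths in which the path directed as $v_1\to v_2\to\cdots\to v_n$ is encoded by appending a loop at its terminal vertex, so that
\[
    P_{HP_n}(A)=\sum_{\substack{v_1,\dots,v_n\\ \text{distinct}}}a_{v_n,v_n}\prod_{k=1}^{n-1}a_{v_k,v_{k+1}};
\]
equivalently, $P_{HP_n}$ lists the rooted functional trees on $[n]$ that are directed paths, the root being the terminal vertex. I take $n\ge 2$ (the case $n=1$ is trivial) and read $\{i,j\}\subseteq T\subseteq[n]$ in the statement as ranging over ordered pairs of \emph{distinct} $i,j\in T$. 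First I would establish a path analogue of Lemma~\ref{lem:PHn}, then push it through the determinant sum lemma and the multivariable product rule exactly as in the cycle case.

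The analogue of Lemma~\ref{lem:PHn} I would prove is
\[
    P_{HP_n}(A)=\sum_{i\in[n]}\ \sum_{\substack{s\in[n]\\ s\neq i}}a_{i,i}\,\boldsymbol{\partial}_{[n]\setminus\{s\}}\det(\diag(A\boldsymbol{x})-A\diag(\boldsymbol{x}))_{[n]\setminus\{i\}}.
\]
By TDMTT, $a_{i,i}\det(\diag(A\boldsymbol{1})-A)_{[n]\setminus\{i\}}$ is the symbolic listing of rooted functional trees with root $i$; substituting $a_{\ell,m}\leftarrow a_{\ell,m}x_m$ into the non-loop factor turns $\det(\diag(A\boldsymbol{1})-A)_{[n]\setminus\{i\}}$ into $\det(\diag(A\boldsymbol{x})-A\diag(\boldsymbol{x}))_{[n]\setminus\{i\}}$, whose monomials $\prod_{v\neq i}a_{v,f(v)}x_{f(v)}$ run over trees $f$ rooted at $i$, the exponent of $x_w$ being the number of children of $w$. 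This polynomial is homogeneous of degree $n-1$ in $\boldsymbol{x}$, so a monomial is not annihilated by $\boldsymbol{\partial}_{[n]\setminus\{s\}}$ precisely when $s$ has no children and every other vertex has exactly one child; since $i$ then has exactly one child, the tree is forced to be the directed path from $s$ to $i$. Restoring the loop factor $a_{i,i}$ recovers the $P_{HP_n}$-monomial of that path with coefficient one, and summing over the terminal $i$ and the start $s\neq i$ lists each Hamiltonian path once. This is the step I expect to require the most care: unlike the cycle case there is no loop-to-edge replacement, and the rooted trees whose root has two children — which are paths, but with the root in the interior — must be excluded, which the single omitted variable $x_s$ does automatically since such trees have two childless vertices.

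From here the computation parallels the cycle proof. Writing the minor as $\det(-A\diag(\boldsymbol{x})+\diag(A\boldsymbol{x}))_{[n]\setminus\{i\}}$ and applying the determinant sum lemma gives $\sum_{U\subseteq[n]\setminus\{i\}}\det(-A\diag(\boldsymbol{x}))_{U}\,\per(\diag(A\boldsymbol{x}))_{([n]\setminus\{i\})\setminus U}$, and since $(-A\diag(\boldsymbol{x}))_U=-A_U\diag((x_u)_{u\in U})$ we get $\det(-A\diag(\boldsymbol{x}))_U=\det(-A_U)\prod_{u\in U}x_u$, just as in the cycle proof. Fix $i$, $s$, $U$ and set $V:=([n]\setminus\{i\})\setminus U$. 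Applying the multivariable product rule to $\boldsymbol{\partial}_{[n]\setminus\{s\}}$ on the product of $\det(-A_U)\prod_{u\in U}x_u$ (degree $|U|$ in $\boldsymbol{x}$) and $\per(\diag(A\boldsymbol{x}))_V=\prod_{\ell\in V}\sum_k a_{\ell,k}x_k$ (degree $n-1-|U|$), a degree count leaves only the split that assigns the first factor its own $|U|$ variables; this forces $s\notin U$, i.e. $s\in V$, and the surviving contribution is $\det(-A_U)\cdot\boldsymbol{\partial}_{(V\cup\{i\})\setminus\{s\}}\prod_{\ell\in V}\sum_k a_{\ell,k}x_k$. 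That last operator merely extracts the coefficient of $\prod_{w\in(V\cup\{i\})\setminus\{s\}}x_w$, and only assignments $\ell\mapsto g(\ell)$ that are bijections $V\to(V\cup\{i\})\setminus\{s\}$ contribute, so this coefficient equals $\per(A)_{V,(V\cup\{i\})\setminus\{s\}}$.

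Finally I would reindex: rename the terminal $i$ as $j$ and the start $s$ as $i$, and put $T:=V\cup\{j\}$, so that $[n]\setminus T=U$, $T\setminus\{j\}=V$, and $T\setminus\{i\}=(V\cup\{j\})\setminus\{s\}$ (legitimate since $i=s\in V$). The outer summation over $i$, $s$, $U$ then becomes a summation over distinct $i,j\in T\subseteq[n]$, and
\[
    P_{HP_n}(A)=\sum_{\{i,j\}\subseteq T\subseteq[n]}a_{j,j}\det(-A)_{[n]\setminus T}\per(A)_{T\setminus\{j\},T\setminus\{i\}}
\]
drops out. Allowing $i=j$ in the outer sum would only add $\sum_i a_{i,i}\sum_{S\subseteq[n]\setminus\{i\}}\det(-A_S)\per(A_{([n]\setminus\{i\})\setminus S})$, whose inner sums vanish for $n\ge 2$ by the very sign cancellation exploited in the combinatorial proof, so the identity is unaffected.
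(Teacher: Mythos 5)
Your proof is correct and follows essentially the same route as the paper's: the same TDMTT-based listing of functional paths into a root via a degree-$(n-1)$ differential operator, then the determinant sum lemma, the multivariable product rule with the identical degree-counting argument, and the same reindexing $T=[n]\setminus S$ at the end. The only differences are notational (your $(i,s)$ is the paper's $(j,i)$) together with some extra care you supply---explicitly ruling out root-in-interior trees and the degenerate $i=j$ terms---that the paper leaves implicit.
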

\begin{proof}
    We begin with
    \[
        P_{HP_n}(A)=\sum_{i,j\in[n]}a_{j,j}\cdot\boldsymbol{\partial}_{[n]\setminus\{i\}}\det(\diag(A\boldsymbol{x})-A\diag(\boldsymbol{x}))_{[n]\setminus\{j\}}.
    \]
    To see this fix $i$ and $j$. By TDMTT,
    \[
        a_{j,j}x_j\det(\diag(A\boldsymbol{x})-A\diag(\boldsymbol{x}))_{[n]\setminus\{j\}}
    \]
    lists all functional trees rooted at vertex $j$ with their in-degree sequences recorded by the variables of $\boldsymbol{x}$. Then
    \[
        a_{j,j}\cdot\boldsymbol{\partial}_{[n]\setminus\{i\}}\det(\diag(A\boldsymbol{x})-A\diag(\boldsymbol{x}))_{[n]\setminus\{j\}}
    \]
    lists all functional $(i,j)$-paths since the partial derivative operator annihilates all terms except those with in-degree sequence $(0,1,\hdots,1,2)$ where vertex $i$ has in-degree zero and vertex $j$ has in-degree two. Summing over $i$ and $j$ we recover $P_{HP_n}$.

    By the determinant sum lemma,
    \[
        P_{HP_n}(A)=\sum_{i,j\in[n]}a_{j,j}\cdot\boldsymbol{\partial}_{[n]\setminus\{i\}}\det(\diag(A\boldsymbol{x})-A\diag(\boldsymbol{x}))_{[n]\setminus\{j\}}
    \]
    \[
        =\sum_{i,j\in[n]}a_{j,j}\cdot\boldsymbol{\partial}_{[n]\setminus\{i\}}\sum_{S\subseteq[n]\setminus\{j\}}\det(-A\diag(\boldsymbol{x}))_{S}\det(\diag(A\boldsymbol{x}))_{[n]\setminus\{j\}\setminus S},
    \]
    and by the multivariable chain rule,
    \[
        =\sum_{i,j\in[n]}a_{j,j}\sum_{S\subseteq[n]\setminus\{j\}}\sum_{T\subseteq[n]\setminus\{i\}}\boldsymbol{\partial}_{T}\det(-A\diag(\boldsymbol{x}))_{S}\cdot\boldsymbol{\partial}_{[n]\setminus\{i\}\setminus T}\det(\diag(A\boldsymbol{x}))_{[n]\setminus\{j\}\setminus S}.
    \]
    Again each term vanishes unless $S=T$ and the right hand factor
    \[
        \boldsymbol{\partial}_{[n]\setminus\{i\}\setminus S}\det(\diag(A\boldsymbol{x}))_{[n]\setminus\{j\}\setminus S}
    \]
    \[
        =\boldsymbol{\partial}_{[n]\setminus\{i\}\setminus S}\prod_{u\in[n]\setminus\{j\}\setminus S}\left(\sum_{v\in[n]}a_{u,v}x_v\right)
    \]
    \[
        =\boldsymbol{\partial}_{[n]\setminus\{i\}\setminus S}\prod_{u\in[n]\setminus\{j\}\setminus S}\left(\sum_{v\in[n]\setminus\{i\}\setminus S}a_{u,v}x_v\right)
    \]
    \[
        =\sum_{\substack{
            f:[n]\setminus S\setminus\{j\}\rightarrow[n]\setminus S\setminus\{i\} \\
            f\text{ bijective}
        }} \left(\prod_{v\in[n]\setminus\{j\}\setminus S}a_{v,f(v)}\right)
    \]
    \[
        =\per(A)_{[n]\setminus\{j\}\setminus S,[n]\setminus\{i\}\setminus S}.
    \]
    Thus we have
    \[
        P_{HP_n}(A)=\sum_{i,j\in[n]}\sum_{S\subseteq[n]\setminus\{i,j\}}a_{j,j}\det(-A\diag(\boldsymbol{x}))_{S}\per(A)_{[n]\setminus\{j\}\setminus S,[n]\setminus\{i\}\setminus S}.
    \]
    Now summing over $T=[n]\setminus S$ and combining the sums we see
    \[
        P_{HP_n}(A)=\sum_{\{i,j\}\subseteq T\subseteq[n]}a_{j,j}\det(-A)_{[n]\setminus T}\per(A)_{T\setminus \{j\},T\setminus \{i\}}.
    \]
\end{proof}

\begin{acknowledgment}{Acknowledgments.}
We thank Kailee Lin for assistance editing this manuscript.
\end{acknowledgment}

\bibliographystyle{vancouver}
\bibliography{main.bib}

\vfill\eject

\end{document}